\newtheorem{defi}{Definition}[section]
\newtheorem{thm}[defi]{Theorem}
\newtheorem{lm}[defi]{Lemma}
\newtheorem{cor}[defi]{Corollary}
\newtheorem{pro}[defi]{Proposition}
\newcommand{\colim}{\operatorname{\rm colim}}
\newcommand{\proj}[2]{\B #1 P_\mathcal{T}^#2}
\newcommand{\id}{\operatorname{\rm id}}
\newcommand{\B}[1]{{\mathbb #1}}
\begin{document}
\abbrevauthors{J. Rudnik}
\abbrevtitle{The $K$-theory of the triple-Toeplitz deformation of $\B CP^2$}
\title{The $K$-theory of\\ the triple-Toeplitz deformation of\\
the complex projective plane} 
\author{Jan Rudnik}
\address{Instytut Matematyczny, Polska Akademia Nauk,\\
ul.~\'Sniadeckich 8, Warszawa, 00-956 Poland\\
Email: jrudnik@impan.pl}
\maketitlebcp

\begin{abstract}
We consider a family 
$\pi^i_j\colon B_i\rightarrow B_{ij}=B_{ji}$, $i,j\in \{1,2,3\}$,
$i\neq j$, of $C^*$-epimorphisms assuming that it
satisfies the cocycle condition. Then we show how
 to compute the $K$-groups of the multi-pullback $C^*$-algebra
of such a family, and examplify it in the case
of the triple-Toeplitz deformation of $\B CP^2$.
\end{abstract}


\section*{Introduction}
Starting from the affine covering of a projective space, a new type of 
noncommutative deformations of complex projective spaces was introduced 
in~\cite{pmh}.  Therein, the complex projective space $\mathbb{C}P^n$ is
presented as a natural gluing of polydiscs, dualized to the 
multi-pullback  $C^*$-algebra, and deformed to a multi-pullback
of tensor powers of Toeplitz algebras. The case of $n=1$ 
was analyzed in detail
 in~\cite{pms}, and called the mirror
quantum sphere. In particular,
its $K$-groups were easily determined. 

The goal of this note is to
determine the $K$-groups in the case $n=2$, which requires some tools.
The $C^*$-algebra of the mirror quantum sphere is simply a pullback
$C^*$-algebra, so that its $K$-theory is immediately computable by the 
Mayer-Vietoris six-term exact sequence. 
The $C^*$-algebra of the triple-Toeplitz deformation of $\mathbb{C}P^2$
is a triple-pullback $C^*$-algebra, and it turns out 
that, in order to apply 
(three times) the 
Mayer-Vietoris six-term exact sequence, we need to check the cocycle
condition. 

We begin by general considerations allowing us to combine the cocycle
condition, the distributivity of $C^*$-ideals, and the 
Mayer-Vietoris six-term exact sequence into a certain general 
computational tool. Then we use it
to establish the $K$-groups of the aforementioned quantum~$\mathbb{C}P^2$.

To focus attention and for the sake of simplicity, we start by considering
the category of vector spaces. 
Let $J$ be a finite set, and let
\begin{equation}\label{family}
\{\pi^i_j:B_i\longrightarrow B_{ij}=B_{ji}\}_{i,j\in J,\,i\neq j}
\end{equation} 
be a family of  homomorphisms.
In this category, the multi-pullback
of a family~\eqref{family}  can be defined as follows.
\begin{defi}[\cite{GK-P,cm}]
The \emph{multi-pullback} $B^\pi$ of a family~\eqref{family} of  
homomorphisms is defined as
\[
B^\pi:=\left\{\left.(b_i)_i\in\prod_{i\in J}B_i\;\right|\;\pi^i_j(b_i)=\pi^j_i(b_j),\;\forall\, i,j\in J,\, i\neq j
\right\}.
\]
\end{defi}

If $J=\{1,2,3\}$,  a family~\eqref{family} is depicted by the
diagram
\begin{equation}\label{multidiagram}
\xymatrix{
B_1\ar[dr]^{\pi_2^1}\ar@/_2pc/[ddrr]_{\pi_3^1}&&B_2\ar[dl]_{\pi_1^2}\ar[dr]^{\pi_3^2}&&B_3\ar[dl]_{\pi_2^3}\ar@/^2pc/[ddll]^{\pi_1^3}\\
&B_{12}&&B_{23}&\\
&&B_{13}&&},
\end{equation}
and its multi-pullback $B^\pi$ can be interpreted as the limit of this
diagram. (Recall that the limit (colimit) of a diagram is a certain
universal object  together
with morphisms  from it to (to it from) all objects in the diagram.) Furthermore,
one can easily transform the triple-pullback
$B^\pi$ into an iterated pullback:
\begin{lm}\label{vect}
Let $B^\pi$ be the multi-pullback of a family~\eqref{family} for
$J=\{1,2,3\}$. Then the canonical identification of vector 
spaces $V^3\to V^2\times V$ yields an isomorphism from $B^\pi$
to the  pullback vector space $P$ of the top
sub-diagram of the diagram
\begin{equation}\label{iter1}
\xymatrix@-7pt{
&&&P\ar[dll]\ar[drr]&&\\
&P_1\ar[dr]\ar[drrr]^{\gamma}\ar[dl]&&&&B_3\ar[dl]_{\delta}\\
B_1\ar[dr]^{\pi^1_2}&&B_2\ar[dl]_{\pi^2_1}&&P_2\ar[dl]\ar[dr]&\\
&B_{12}&&B_{13}\ar[dr]^{\eta^1}&&B_{23}.\ar[dl]_{\eta^2}\\
&&&&\colim\eqref{multidiagram}&}
\end{equation}
Here all three square sub-diagrams are pullback diagrams, 
$\gamma(b_1,b_2):=(\pi^1_3(b_1),\pi^2_3(b_2))$, 
$\delta(b_3):=(\pi^3_1(b_3),\pi^3_2(b_3))$, and $\eta^1$, $\eta^2$ come 
from the colimit
 of the 
diagram~\eqref{multidiagram}.
\end{lm}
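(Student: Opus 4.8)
The plan is to unwind the three pullback squares of~\eqref{iter1} into explicit subspaces cut out by linear equations, and then to match those equations, under the canonical identification $V^3\cong V^2\times V$, with the defining equations of $B^\pi$.

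First I would compute the two lower pullbacks. The left square realizes
\[
P_1=\{(b_1,b_2)\in B_1\times B_2 \mid \pi^1_2(b_1)=\pi^2_1(b_2)\},
\]
with the two downward maps the coordinate projections, and the right square realizes
\[
P_2=\{(c,c')\in B_{13}\times B_{23} \mid \eta^1(c)=\eta^2(c')\},
\]
again with coordinate projections.

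Next comes the one genuine verification, namely that $\gamma$ and $\delta$ actually land in $P_2$ and not merely in $B_{13}\times B_{23}$; this is where the colimit enters. Writing $g_{12}$, $g_{13}=\eta^1$, $g_{23}=\eta^2$ for the structure maps of $\colim\eqref{multidiagram}$, the cocone relations read $g_{13}\pi^1_3=g_{12}\pi^1_2$, $g_{23}\pi^2_3=g_{12}\pi^2_1$, and $g_{13}\pi^3_1=g_{23}\pi^3_2$. For $(b_1,b_2)\in P_1$ one then computes $\eta^1(\pi^1_3(b_1))=g_{12}(\pi^1_2(b_1))=g_{12}(\pi^2_1(b_2))=\eta^2(\pi^2_3(b_2))$, using the first two relations together with $\pi^1_2(b_1)=\pi^2_1(b_2)$; hence $\gamma(b_1,b_2)\in P_2$. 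For $\delta$, the third relation gives $\eta^1(\pi^3_1(b_3))=\eta^2(\pi^3_2(b_3))$ directly, so $\delta(b_3)\in P_2$. With $\gamma,\delta$ thus well defined, the top square realizes
\[
P=\{((b_1,b_2),b_3)\in P_1\times B_3 \mid \gamma(b_1,b_2)=\delta(b_3)\}.
\]

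Finally I would read off the defining conditions of $P$. Equality in $P_2\subseteq B_{13}\times B_{23}$ is componentwise, so $\gamma(b_1,b_2)=\delta(b_3)$ is exactly $\pi^1_3(b_1)=\pi^3_1(b_3)$ together with $\pi^2_3(b_2)=\pi^3_2(b_3)$, while membership $(b_1,b_2)\in P_1$ records $\pi^1_2(b_1)=\pi^2_1(b_2)$. These three equations are precisely the constraints defining $B^\pi$. Thus the canonical linear bijection $V^3\to V^2\times V$, $(b_1,b_2,b_3)\mapsto((b_1,b_2),b_3)$, carries $B^\pi$ bijectively onto $P$, and being the restriction of a linear isomorphism it is an isomorphism of vector spaces. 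I expect the only real subtlety to be the well-definedness of $\gamma$ and $\delta$ into $P_2$; once that is in place the identification of the equations is pure bookkeeping.
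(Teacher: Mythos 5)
Your proof is correct and takes essentially the same approach as the paper's: unwind the pullback conditions of the three squares and observe that, under the re-bracketing map $V^3\to V^2\times V$, they are exactly the defining equations of $B^\pi$. The one point where you go beyond the paper is in checking via the cocone relations that $\gamma$ and $\delta$ genuinely land in $P_2$; the paper treats this as implicit in the statement of the lemma, so your verification is a welcome but not essentially different addition.
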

\begin{proof}
By construction, any element of $P$ is a pair
$((b_1,b_2),b_3)\in (B_1\times B_2)\times B_3$ such that
$\pi^1_2(b_1)=\pi^2_1(b_2)$ and
$
(\pi^1_3(b_1),\pi^2_3(b_2))=:\gamma((b_1,b_2))
=\delta(b_3):=(\pi^3_1(b_3),\pi^3_2(b_3)).
$
Hence the re-bracketing map from $B^\pi$ to $P$
is an isomorphism, as claimed. 
\end{proof}

We can still remain in the category of vector spaces to define the
second key concept of this note, notably the cocycle condition.
First, we assume that all maps of a family \eqref{family} 
are surjective. Then, for any distinct $i,j,k$, we put
$B^i_{jk}:=B_i/(\ker\pi^i_j+\ker\pi^i_k)$ and take 
$[\cdot]^i_{jk}:B_i\rightarrow B^i_{jk}$ to be the canonical surjections.
Next, we introduce the family of isomorphisms
\begin{equation}
\pi^{ij}_k:B^i_{jk}\longrightarrow B_{ij}/\pi^i_j(\ker\pi^i_k),\qquad
[b_i]^i_{jk}\longmapsto\pi^i_j(b_i)+\pi^i_j(\ker\pi^i_k).
\end{equation}
Now we are ready for:
\begin{defi}\label{cocycle}
\cite[in Proposition~9]{cm}
 We say that a  family~\eqref{family} of epimorphisms
satisfies the {\em cocycle condition}
 if and only if, for all distinct $i,j,k\in J$, 
\begin{enumerate}
\item $\pi^i_j(\ker\pi^i_k)=\pi^j_i(\ker\pi^j_k)$,
\item the isomorphisms $\phi^{ij}_k:=(\pi^{ij}_k)^{-1}\circ\pi^{ji}_k:B^j_{ik}\rightarrow B^i_{jk}$  satisfy 
$\phi^{ik}_j=\phi^{ij}_k\circ\phi^{jk}_i$.
\end{enumerate}
\end{defi}

\section{A method for computing the $K$-groups of triple-pullback $C^*$-
algebras}

To avoid redundant assumptions, we split this section into an
algebraic and $C^*$-algebraic part. The latter appears as the special
case of the former.

\subsection{Algebras with  distributive lattices of ideals}

From now on we specialize the category of vector spaces
to the category of unital algebras and algebra homomorphisms.
Much of what we do in this subsection is re-casting 
\cite[Corollary~4.3]{hkmz}. However, since our focus is on triple-pullback
algebras, we provide simple direct arguments to spare the reader the
language of sheaves.
First, we slightly extend \cite[Proposition~9]{cm}:
\begin{lm}
Assume that a family~\eqref{family} of algebra epimorphisms 
satisfies the cocycle condition and the kernels of 
these epimorphisms generate a distributive lattice of ideals.
Denote by $\pi_i$, $i\in J$, the restriction of the $i$-th canonical 
projection to the multi-pullback
$B^\pi$ of the family~\eqref{family}.
Then  $B_i\cong B^\pi/\ker{\pi_i}$ for all $i\in J$
 and $B_{ij}\cong B^\pi/(\ker\pi_i+\ker\pi_j)$ for all distinct
 $i,j\in J$.
\end{lm}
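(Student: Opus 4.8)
The plan is to reduce both isomorphisms to the first isomorphism theorem and then to a single lifting problem governed by the cocycle condition. For each $i\in J$ write $\pi_i\colon B^\pi\to B_i$ for the canonical projection, and for distinct $i,j$ set $\rho_{ij}:=\pi^i_j\circ\pi_i$. The defining relations of $B^\pi$ give $\rho_{ij}=\pi^j_i\circ\pi_j$, so $\rho_{ij}$ is well defined with $\ker\rho_{ij}=\{(b_k)_k\in B^\pi:\pi^i_j(b_i)=0\}$. By the first isomorphism theorem $B^\pi/\ker\pi_i\cong\im\pi_i$ and $B^\pi/\ker\rho_{ij}\cong\im\rho_{ij}$, so the first assertion is exactly the surjectivity of $\pi_i$; granting this, $\rho_{ij}=\pi^i_j\circ\pi_i$ is a composite of surjections, hence onto, and $B_{ij}\cong B^\pi/\ker\rho_{ij}$. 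It then remains only to prove $\ker\rho_{ij}=\ker\pi_i+\ker\pi_j$. The inclusion $\ker\pi_i+\ker\pi_j\subseteq\ker\rho_{ij}$ is immediate, as any tuple with vanishing $i$-th or $j$-th entry is annihilated by $\rho_{ij}$. Thus everything reduces to two lifting statements: (A) every $b_i\in B_i$ occurs as the $i$-th entry of a compatible tuple, and (B) every $(b_k)_k\in\ker\rho_{ij}$ decomposes as a sum of a tuple in $\ker\pi_i$ and one in $\ker\pi_j$.

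I would first settle the focal case $J=\{1,2,3\}$, where (A) and (B) rest on the cocycle condition alone. For (A), given $b_1$ use surjectivity of $\pi^2_1$ and $\pi^3_1$ to pick $b_2,b_3$ with $\pi^2_1(b_2)=\pi^1_2(b_1)$ and $\pi^3_1(b_3)=\pi^1_3(b_1)$; only $\pi^2_3(b_2)=\pi^3_2(b_3)$ may fail. Its defect $d:=\pi^2_3(b_2)-\pi^3_2(b_3)$ can be removed by passing to $b_2-c_2$ with $c_2\in\ker\pi^2_1$ precisely when $d\in\pi^2_3(\ker\pi^2_1)=\pi^3_2(\ker\pi^3_1)$, the two images coinciding by condition~(1) of Definition~\ref{cocycle}. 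Reducing $B_{23}$ modulo this common image and rewriting through $\pi^{23}_1$ and $\pi^{32}_1$, the vanishing of $d$ there amounts to $[b_2]^2_{13}=\phi^{23}_1([b_3]^3_{12})$; tracing $b_2$ and $b_3$ back to $b_1$ via condition~(1) identifies both sides as images of $[b_1]^1_{23}$, so the required equality becomes $\phi^{21}_3=\phi^{23}_1\circ\phi^{31}_2$, which is exactly condition~(2). For (B), say with $i=1,j=2$, I would look for $y:=(b_1,0,y_3)\in B^\pi$; the pullback relations force $y_3\in\ker\pi^3_2$ with $\pi^3_1(y_3)=\pi^1_3(b_1)$, and since $b_1\in\ker\pi^1_2$ condition~(1) gives $\pi^1_3(b_1)\in\pi^1_3(\ker\pi^1_2)=\pi^3_1(\ker\pi^3_2)$, so such $y_3$ exists. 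Then $y\in\ker\pi_2$ and $(b_1,b_2,b_3)-y\in\ker\pi_1$ furnish the splitting.

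For general finite $J$ the same two problems arise, but a chosen family of preimages $b_j'$ now leaves a defect in every remaining compatibility $\pi^j_k(b_j')=\pi^k_j(b_k')$, and the corrections $c_j\in\ker\pi^j_i$ must annihilate all of these simultaneously. This is where the distributivity of the lattice generated by the ideals $\ker\pi^i_j$ enters: together with the cocycle condition it allows the corrections to be chosen coherently, most naturally by induction on $\lvert J\rvert$ after reducing an $n$-fold pullback to an iterated ordinary pullback in the spirit of Lemma~\ref{vect}. I expect this coherent simultaneous lifting to be the main obstacle. The cocycle conditions guarantee that each individual defect lands in a correctable subspace (condition~(1)) and that the separate choices are mutually consistent (condition~(2)), while distributivity secures the existence of a common solution; once (A) and (B) hold, both isomorphisms follow formally from the reductions of the first paragraph.
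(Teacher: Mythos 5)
Your overall reduction coincides with the paper's own: by the first isomorphism theorem, the first assertion amounts to the surjectivity of the restricted projections $\pi_i$, and the second reduces to the identity $\ker(\pi^i_j\circ\pi_i)=\ker\pi_i+\ker\pi_j$, which in turn follows from lifting the compatible pair $(\pi_i(b),0)$ to an element of $B^\pi$. Moreover, for $J=\{1,2,3\}$ your arguments are correct and complete: the identity $\phi^{21}_3=\phi^{23}_1\circ\phi^{31}_2$ needed in your step (A) is indeed the instance $i=2$, $j=3$, $k=1$ of condition (2) of Definition~\ref{cocycle}, and your construction $y=(b_1,0,y_3)$ in step (B) uses only condition (1). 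In the triple case your proof is in fact more self-contained than the paper's, which obtains surjectivity by citing \cite[Proposition~9]{cm} and the pair-lifting property by citing \cite[Theorem~7(2)]{hz}, rather than constructing the lifts by hand.

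The gap is that the lemma is stated for an arbitrary finite index set $J$, and for $|J|\geq 4$ your proposal is a plan, not a proof. The claim that the corrections $c_j\in\ker\pi^j_i$ can be chosen coherently by induction on $|J|$, with distributivity securing the existence of a common solution, is precisely the nontrivial content of \cite[Theorem~7]{hz} (the equivalence, under the distributivity hypothesis, of the cocycle condition with liftability of compatible partial families); this is a theorem with a genuine proof, not a routine bookkeeping step. Concretely, once $|J|\geq 4$, modifying a chosen preimage $b_j'$ by some $c_j\in\ker\pi^j_i$ to kill the defect in the pair $(j,k)$ perturbs the defects in all other pairs $(j,l)$, $l\neq k$, so all pairwise constraints must be solved simultaneously; condition (1) only places each individual defect in a correctable image, and making the separate corrections consistent forces one to manipulate sums and intersections of the kernels, which is exactly where distributivity enters in an essential rather than merely facilitating way---you yourself flag this as ``the main obstacle'' but do not overcome it. As written, your argument establishes the lemma only for triple pullbacks (admittedly the only case the paper uses afterwards); to prove the statement as given you must either carry out that induction in detail or invoke \cite[Theorem~7(2)]{hz} as the paper does.
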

\begin{proof}
The existence of isomorphisms  
$B_i\cong B^\pi/\ker{\pi_i}$, $i\in J$, is simply a re-statement
of \cite[Proposition~9]{cm}. To show the existence
of the second family of isomorphisms, we apply \cite[Theorem~7(2)]{hz}
to conclude that, for any distinct $i,j\in J$ and any $b_i\in B_i$,
$b_j\in B_j$, such that $\pi^i_j(b_i)=\pi^j_i(b_j)$, there exists
an element $b\in B^\pi$ such that $\pi_i(b)=b_i$ and $\pi_j(b)=b_j$.
This allows us to prove that the kernels of algebra epimorphisms 
$\pi_{ij}:=\pi^i_j\circ\pi_i=\pi^j_i\circ\pi_j$ are $\ker\pi_i+\ker\pi_j$.
Indeed, if $b\in\ker\pi_{ij}$, then $\pi^i_j(\pi_i(b))=0$ and there 
exists $b'\in B^\pi$ such that $\pi_i(b')=\pi_i(b)$ and $\pi_j(b')=0$.
Therefore, since $b-b'\in\ker\pi_i$ and $b'\in\ker\pi_j$, we
infer that $b\in\ker\pi_i+\ker\pi_j$, as needed. The inclusion
$\ker\pi_i+\ker\pi_j\subseteq\ker\pi_{ij}$ is obvious.
\end{proof}

Combining the above lemma with the \cite[Proposition~8]{hz}, we obtain:
\begin{lm}\label{canlem}
Assume that a family~\eqref{family} of algebra epimorphisms
is such that the restrictions of the  canonical 
projections to the multi-pullback
$B^\pi$ of the family~\eqref{family} are surjective and their kernels
 generate a distributive lattice of ideals.
Then the algebra $B^\pi$ is isomorphic to the
multi-pullback algebra of the  family of canonical surjections
$B^\pi/\ker\pi_i\to B^\pi/(\ker\pi_i+\ker\pi_j)$, $i,j\in J$, $i\neq j$.
\end{lm}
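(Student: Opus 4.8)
The plan is to construct the obvious comparison map and to isolate the single nontrivial point as a patching statement controlled by distributivity. Write $K_i:=\ker\pi_i$ for the kernel of the restricted projection $\pi_i\colon B^\pi\to B_i$, and let $N$ denote the multi-pullback of the family of canonical surjections $\bar\pi^i_j\colon B^\pi/K_i\to B^\pi/(K_i+K_j)$. Define $\Phi\colon B^\pi\to\prod_{i\in J}B^\pi/K_i$ by $\Phi(b):=\big([b]_{K_i}\big)_{i\in J}$, where $[b]_{K_i}$ denotes the class of $b$ modulo $K_i$. First I would check that $\Phi$ lands in $N$: for a fixed $b$, the images $\bar\pi^i_j([b]_{K_i})$ and $\bar\pi^j_i([b]_{K_j})$ are both equal to the single class of $b$ in $B^\pi/(K_i+K_j)$, so the multi-pullback compatibility is automatic.

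Next I would verify injectivity, which is immediate: $\ker\Phi=\bigcap_{i\in J}K_i$, and this intersection is trivial because $B^\pi\subseteq\prod_{i\in J}B_i$ with the $\pi_i$ being the coordinate projections, so $\pi_i(b)=0$ for all $i$ forces $b=0$. Thus $\Phi$ is an injective algebra homomorphism onto a subalgebra of $N$, and it remains only to prove surjectivity.

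Surjectivity is the heart of the matter and the step I expect to be the real obstacle. An element of $N$ is a tuple $([c_i]_{K_i})_{i\in J}$ with $c_i\in B^\pi$ satisfying $c_i-c_j\in K_i+K_j$ for all $i\neq j$; to hit it by $\Phi$ I must produce a single $b\in B^\pi$ with $b-c_i\in K_i$ for every $i$. This is precisely a simultaneous-lifting problem for the ideals $K_i$: for two ideals such a lift always exists by a direct computation, whereas for three or more it is the distributivity of the lattice generated by the $K_i$ that makes the patching possible. I would discharge this step by invoking \cite[Proposition~8]{hz}, which asserts exactly that a compatible tuple can be patched when the relevant lattice of ideals is distributive.

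Finally I would record that the construction is consistent with the earlier identifications, so that $N$ is genuinely the object named in the statement: by the preceding lemma the targets $B^\pi/K_i$ and $B^\pi/(K_i+K_j)$ of the new family are canonically $B_i$ and $B_{ij}$, and under these identifications the canonical surjections $\bar\pi^i_j$ correspond to the original maps $\pi^i_j$. Combining this with the bijectivity of $\Phi$ yields the asserted isomorphism $B^\pi\cong N$.
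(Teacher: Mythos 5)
Your proposal is correct and matches the paper's approach: the paper proves this lemma simply by citing \cite[Proposition~8]{hz} (together with the preceding lemma for the identifications $B_i\cong B^\pi/\ker\pi_i$, $B_{ij}\cong B^\pi/(\ker\pi_i+\ker\pi_j)$), which is exactly the patching-by-distributivity result you isolate as the surjectivity step. Your unfolding of the comparison map, its well-definedness, and injectivity via $\bigcap_i\ker\pi_i=0$ is just the explicit content of that citation, so there is nothing genuinely different here.
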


Now we specialize multi-pullbacks to triple-pullbacks, and consider
a special case of the iterated pullback diagram of Lemma~\ref{vect}:
\begin{equation}\label{iterd}
\xymatrix@-10pt@C=-10pt{
&&&{\widetilde P}\ar[dll]\ar[drr]&&\\
&\widetilde P_1\ar[dr]\ar[drrr]^{\widetilde\gamma}\ar[dl]&&&&B^\pi/I_3\ar[dl]_{\widetilde\delta}\\
B^\pi/I_1\ar[dr]&&B^\pi/I_2\ar[dl]&&\widetilde P_2\ar[dl]\ar[dr]&\\
&B^\pi/(I_1+I_2)&&B^\pi/(I_1+I_3)\ar[dr]&&B^\pi/(I_2+I_3).\ar[dl]\\
&&&&B^\pi/(I_1+I_2+I_3)&}
\end{equation}
Here $I_i:=\ker\pi_i$, $i\in\{1,2,3\}$, $\widetilde\gamma(a,b)
:=(a+I_3,b+I_3)$, $\widetilde\delta(c):=(c+I_1,c+I_2)$, and
all three square sub-diagrams are pullback diagrams. To further
abbreviate the notation, we will use $B^\pi_{i}:=B^\pi/I_i$ and
$B^\pi_{ij}:=B^\pi/(I_i+I_j)$ for all distinct $i,j\in\{1,2,3\}$, 
and $B^\pi_{123}:=B^\pi/(I_1+I_2+I_3)$.
\begin{pro}\label{iter}
Assume that a family~\eqref{family} of algebra epimorphisms 
satisfies the cocycle condition and the kernels of 
these epimorphisms generate a distributive lattice of ideals.
Assume also that the kernels of the restrictions of the  canonical 
projections to the multi-pullback
$B^\pi$ of the family~\eqref{family}
generate a distributive lattice of ideals. Take $J=\{1,2,3\}$.
Then the  pullback algebra $\widetilde P$ of diagram~\eqref{iterd} is
isomorphic to $B^\pi$, and
all homomorphisms in this diagram are surjective.
\end{pro}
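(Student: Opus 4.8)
The plan is to obtain the isomorphism $\widetilde P\cong B^\pi$ by chaining the two preceding lemmas, and then to verify surjectivity by elementary pullback arguments; the distributivity hypotheses will be consumed entirely by the isomorphism part. First I would check that the hypotheses of Lemma~\ref{canlem} hold. The cocycle condition together with the distributivity of the lattice generated by the $\ker\pi^i_j$ lets me invoke the first lemma of this subsection, which gives $B_i\cong B^\pi/\ker\pi_i$ and in particular the surjectivity of each restricted projection $\pi_i\colon B^\pi\to B_i$. Combined with the assumed distributivity of the lattice generated by the $I_i=\ker\pi_i$, Lemma~\ref{canlem} then identifies $B^\pi$ with the multi-pullback of the family of canonical surjections $B^\pi/I_i\to B^\pi/(I_i+I_j)$.

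Next I would feed this family of canonical surjections into Lemma~\ref{vect}. Its re-bracketing isomorphism (visibly multiplicative, hence an algebra isomorphism) presents the multi-pullback as the iterated pullback $P$ of diagram~\eqref{iter1}, so it remains only to recognise that for this family diagram~\eqref{iter1} is diagram~\eqref{iterd}. Under the identifications $B_i=B^\pi/I_i$ and $B_{ij}=B^\pi/(I_i+I_j)$ the maps $\gamma,\delta$ become $\widetilde\gamma,\widetilde\delta$, so the only point to verify is that $\colim\eqref{multidiagram}=B^\pi/(I_1+I_2+I_3)$ with $\eta^1,\eta^2$ the canonical surjections. This is a short universal-property check: any cocone forces the three maps $B^\pi\to Q$ obtained by precomposing the $f_i$ with $B^\pi\to B^\pi/I_i$ to coincide and hence to annihilate $\sum_i I_i=I_1+I_2+I_3$, while $B^\pi/(I_1+I_2+I_3)$ plainly carries a compatible cocone, so it is the colimit. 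This yields $\widetilde P\cong B^\pi$.

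For surjectivity I would work upward from the bottom of~\eqref{iterd}. The base maps $B^\pi/I_i\to B^\pi/(I_i+I_j)$ and $B^\pi/(I_i+I_j)\to B^\pi/(I_1+I_2+I_3)$ are quotient maps and hence surjective, so the projections $\widetilde P_1\to B^\pi/I_i$ and $\widetilde P_2\to B^\pi/(I_i+I_3)$ are surjective as projections of pullbacks of surjections. It thus suffices to show $\widetilde\gamma$ and $\widetilde\delta$ surjective, for then $\widetilde P\to\widetilde P_1$ and $\widetilde P\to B^\pi/I_3$ are again projections of a pullback of surjections. This last step is elementary: an element of $\widetilde P_2$ is a pair $(x+(I_1+I_3),\,y+(I_2+I_3))$ with $x-y\in I_1+I_2+I_3$, so writing $x-y=u_1+u_2+u_3$ with $u_i\in I_i$, the element $c:=x-u_1-u_3$ lifts it under $\widetilde\delta$ and $(x+I_1,\,(y+u_2+u_3)+I_2)$ lifts it under $\widetilde\gamma$.

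I expect no genuine obstacle: the real content is imported through Lemmas~\ref{vect} and~\ref{canlem}, and the only step demanding care is the colimit identification matching~\eqref{iter1} to~\eqref{iterd}. I would spell this out carefully to be certain that no distributivity assumption is being smuggled into the surjectivity argument, which in fact uses nothing beyond the additive decomposition of elements of the ideal sum $I_1+I_2+I_3$.
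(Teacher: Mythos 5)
Your proposal is correct, and its skeleton coincides with the paper's: both pass through Lemma~\ref{canlem} to put the family into canonical form (using the preceding lemma to get surjectivity of the $\pi_i$ from the cocycle condition and distributivity), apply the re-bracketing isomorphism of Lemma~\ref{vect}, and then reduce surjectivity of all arrows in~\eqref{iterd} to that of $\widetilde\gamma$ and $\widetilde\delta$, since the other arrows are either canonical quotient maps or projections in pullbacks of surjections. The genuine difference is in how those two surjectivities are proved. The paper invokes \cite[Lemma~2.1]{hkmz} (the isomorphism $A/(I\cap J)\cong A/I\times_{A/(I+J)}A/J$): once for $\widetilde\delta$, and twice more for $\widetilde\gamma$, where a common lift $a\in B^\pi_{12}$ is chosen and then lifted together with $b$ (resp.\ $c$) to elements $\alpha\in B^\pi_1$, $\beta\in B^\pi_2$ forming a preimage $(\alpha,\beta)\in\widetilde P_1$. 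You instead inline the content of that lemma: from a decomposition $x-y=u_1+u_2+u_3$ with $u_i\in I_i$ of representatives of a point of $\widetilde P_2$, you write down the explicit preimages $x-u_1-u_3+I_3$ under $\widetilde\delta$ and $\bigl(x+I_1,\,(y+u_2+u_3)+I_2\bigr)$ under $\widetilde\gamma$; both verify correctly. Your version is self-contained and makes explicit what the paper leaves implicit, namely that no distributivity assumption enters the surjectivity step, while the paper's route is shorter and consistent with its reliance on \cite{hkmz} elsewhere. You also take care to identify $\colim\eqref{multidiagram}$ for the canonical family with $B^\pi/(I_1+I_2+I_3)$, so that~\eqref{iter1} genuinely specializes to~\eqref{iterd}; the paper passes over this point in the proof (it surfaces only in Corollary~\ref{cor1}), so this is a worthwhile addition rather than a deviation.
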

\begin{proof}
First we take advantage of Lemma~\ref{canlem} to transform
the family~\eqref{family} into its canonical form.  Then we apply
Lemma~\ref{vect} to conclude that the pullback algebra $\widetilde P$ of
the iterated pullback diagram~\eqref{iterd} is isomorphic to the
triple-pullback algebra~$B^\pi$ by the re-bracketing isomorphism. Thus
we can replace $\widetilde P$ by $B^\pi$ in the diagram~\eqref{iterd}.

Since all square sub-diagrams are pullback diagrams and canonical 
quotient maps are surjective, to prove the surjectivity of all 
homomorphisms in the diagram~\eqref{iterd} it suffices to show
the surjectivity of $\widetilde\gamma$ and $\widetilde\delta$. The latter map is surjective
 by \cite[Lemma~2.1]{hkmz}.  It requires a little bit more work to
prove the surjectivity of $\widetilde\gamma$, but our argument is again based on
\cite[Lemma~2.1]{hkmz}. 

Let $(b,c)\in \widetilde P_2$.   Take $a\in B^\pi_{12}$ that is mapped to the same
element in  $B^\pi_{123}$ as $b$ and $c$. It follows from
\cite[Lemma 2.1]{hkmz} that there exists an element $\alpha\in B^\pi_1$ such that $\alpha+I_2=a$ and $\alpha+I_3=b$. Much in the same way,
 we show that there exists an element $\beta\in B^\pi_2$ satisfying 
$\beta+I_1=a$ and $\beta+I_3=c$. By construction, $(\alpha,\beta)\in \widetilde P_1$
 and $\widetilde\gamma((\alpha,\beta))=(b,c)$.
\end{proof}

Finally, since for all distinct $i,j\in\{1,2,3\}$ the identifications 
$B_i\cong B^\pi_i$ and
$B_{ij}\cong B^\pi_{ij}$ are such that together with $\pi^i_j$'s
and canonical quotient maps they form commutative square diagrams,
we immediately conclude:
\begin{cor}\label{cor1}
Assume that a family~\eqref{family} of algebra epimorphisms 
satisfies the cocycle condition and the kernels of 
these epimorphisms generate a distributive lattice of ideals.
Assume also that the kernels of the restrictions of the  canonical 
projections to the multi-pullback
$B^\pi$ of the family~\eqref{family}
generate a distributive lattice of ideals. Take $J=\{1,2,3\}$. Then in the diagram~\eqref{iter1} we can take $\eta^1$ and $\eta^2$ to
be defined as
 $B_{13}\stackrel{\eta^1}{\to}B^\pi_{123}\stackrel{\eta^2}{\leftarrow} B_{23}$, $\eta^i(b):=\widetilde b+ I_1+I_2+I_3$, where $\widetilde b$ is such that $\pi_3^i(\pi_i(\widetilde b))=b$, $i\in\{1,2\}$, and 
 all homomorphisms in this diagram are surjective.
\end{cor}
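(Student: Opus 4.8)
The plan is to transport the completely explicit diagram~\eqref{iterd}, about which Proposition~\ref{iter} already records everything we need, back onto the diagram~\eqref{iter1} written in terms of the original data $B_i$, $B_{ij}$, $\pi^i_j$. The bridge is the two families of isomorphisms $B_i\cong B^\pi_i$ and $B_{ij}\cong B^\pi_{ij}$ furnished by the first lemma of this subsection, together with the compatibility noted just before the statement: these isomorphisms sit in commutative squares relating each $\pi^i_j$ to the corresponding canonical quotient map $B^\pi_i\to B^\pi_{ij}$.

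First I would promote this square-by-square compatibility to an isomorphism of whole diagrams. Since diagram~\eqref{iter1} is exactly the diagram of Lemma~\ref{vect} for the original family, while~\eqref{iterd} is the same diagram for the canonical family, and since the isomorphisms $B_i\cong B^\pi_i$, $B_{ij}\cong B^\pi_{ij}$ intertwine the structure maps, they induce an isomorphism of the two multi-pullback data and hence of their pullback completions $P\cong\widetilde P$, $P_1\cong\widetilde P_1$, $P_2\cong\widetilde P_2$. In particular the bottom objects correspond: $\colim\eqref{multidiagram}\cong B^\pi_{123}$, the latter being the realization of the colimit already exploited when Lemma~\ref{vect} was applied in the proof of Proposition~\ref{iter}. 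This licenses installing $B^\pi_{123}$ as the bottom object of~\eqref{iter1}.

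With the diagrams identified, every assertion of Proposition~\ref{iter} transfers verbatim: each square of~\eqref{iter1} is a pullback because the corresponding square of~\eqref{iterd} is, and every homomorphism of~\eqref{iter1} is surjective because the isomorphic one in~\eqref{iterd} is. It then remains only to read off the induced bottom maps $\eta^i\colon B_{i3}\to B^\pi_{123}$. Tracing the isomorphism $B_{i3}\cong B^\pi_{i3}$, an element $b\in B_{i3}$ corresponds to the class $\widetilde b+(I_i+I_3)$ of any $\widetilde b\in B^\pi$ with $\pi^i_3(\pi_i(\widetilde b))=b$; post-composing with the canonical quotient $B^\pi_{i3}\to B^\pi_{123}$ gives $\eta^i(b)=\widetilde b+(I_1+I_2+I_3)$, which is precisely the asserted formula.

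The one point demanding care, and the place I expect the sole (minor) obstacle, is the well-definedness of this formula, that is, its independence of the chosen lift $\widetilde b$. Two lifts differ by an element of $\ker(\pi^i_3\circ\pi_i)=\ker\pi_{i3}$, so I would invoke the identity $\ker\pi_{ij}=I_i+I_j$ established inside the proof of the first lemma of this subsection to see that such a difference lies in $I_i+I_3\subseteq I_1+I_2+I_3$; hence its image in $B^\pi_{123}$ vanishes and $\eta^i$ is unambiguous. Surjectivity of $\eta^i$ is then immediate, either as part of the surjectivity transferred from~\eqref{iterd} or directly from the surjectivity of $\pi^i_3\circ\pi_i$ onto $B_{i3}$.
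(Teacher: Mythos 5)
Your proof is correct and takes essentially the same route as the paper: the paper derives the corollary ``immediately'' from the observation that the isomorphisms $B_i\cong B^\pi_i$ and $B_{ij}\cong B^\pi_{ij}$ intertwine the $\pi^i_j$'s with the canonical quotient maps, thereby identifying diagram~\eqref{iter1} with diagram~\eqref{iterd} and transferring the conclusions of Proposition~\ref{iter}. Your additional check that $\eta^i$ is well defined, via $\ker(\pi^i_3\circ\pi_i)=I_i+I_3\subseteq I_1+I_2+I_3$ from the first lemma's proof, is a detail the paper leaves tacit but is exactly the right justification.
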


\subsection{The case of $C^*$-algebras}

Let us assume from now on that all our algebras are unital C*-algebras, 
and morphisms are $C^*$-homomorphisms. Due to the property of C*-ideals
that $I\cap J=IJ$, their kernels always generate
a distributive lattice of ideals, so that  we are in
the special case of the preceding section. On the other hand,
recall that for the 
pullback $C^*$-algebra $A$ of any pair of $C^*$-homomorpisms 
$A_1\stackrel{\alpha^1}{\to} A_{12}\stackrel{\alpha^2}{\leftarrow} A_2$
of which at least one is surjective,
there is the Mayer-Vietoris six-term exact sequence
(e.g., see \cite[Theorem~21.2.2]{bb}
\cite[Section~1.3]{bhms05}, \cite{sch}):
\begin{equation}
\xymatrix@C=50pt{
K_0(A)\ar[r]&K_0(A_1) \oplus K_0(A_2)\ar[r]^{\hskip1cm\alpha^1_*-\alpha^2_*}&K_0(A_{12})\ar[d]\\
K_1(A_{12})\ar[u]&K_1(A_1) \oplus K_1(A_2)\ar[l]_{\alpha^1_*-\alpha^2_*\hskip0.8cm}&K_1(A).\ar[l]}
\end{equation}

Now we can combine Lemma~\ref{vect} with Corollary~\ref{cor1} and
apply three times the above Mayer-Vietoris six-term exact sequence
to infer:
\begin{cor}\label{cor}
Assume that a family~\eqref{family} is a family of $C^*$-epimorphism
and $J=\{1,2,3\}$. Then, if this family satisfies  
the cocycle condition,  there are 
three six-term exact sequences:

\begin{equation}\nonumber
\xymatrix@C=51pt{
K_0(P_1)\ar[r]&K_0(B_1) \oplus K_0(B_2)\ar[r]^{\hskip1cm\pi^1_{2*}-\pi^2_{1*}}&K_0(B_{12})\ar[d]\\
K_1(B_{12})\ar[u]&K_1(B_1) \oplus K_1(B_2)\ar[l]_{\pi^1_{2*}-\pi^2_{1*}\hskip0.8cm}&K_1(P_1),\ar[l]}
\end{equation}
\smallskip

\begin{equation}\nonumber
\xymatrix@C=50pt{
K_0(P_2)\ar[r]&K_0(B_{13}) \oplus K_0(B_{23})\ar[r]^{\hskip1cm\eta^1_*-\eta^2_*}&K_0(B^\pi_{123})\ar[d]\\
K_1(B^\pi_{123})\ar[u]&K_1(B_{13}) \oplus K_1(B_{23})\ar[l]_{\eta^1_*-\eta^2_*\hskip0.8cm}&K_1(P_2),\ar[l]}
\end{equation}
\smallskip

\begin{equation}\nonumber
\xymatrix@C=51pt{
K_0(B^\pi)\ar[r]&K_0(P_1) \oplus K_0(B_3)\ar[r]^{\hskip1cm\gamma_*-\delta_*}&K_0(P_2)\ar[d]\\
K_1(P_2)\ar[u]&K_1(P_1) \oplus K_1(B_3)\ar[l]_{\gamma_*-\delta_*\hskip0.8cm}&K_1(B^\pi).\ar[l]}
\end{equation}
\end{cor}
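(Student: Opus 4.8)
The plan is to read the iterated pullback diagram~\eqref{iter1}, furnished by Lemma~\ref{vect}, as a stack of three pullback squares of $C^*$-algebras, and to feed each square separately into the Mayer--Vietoris six-term exact sequence. The only input that machine requires is that, in each square, at least one of the two structure maps be surjective; everything else is formal. First I would dispose of the distributivity assumptions. Since for any pair of $C^*$-ideals one has $I\cap J=IJ$, the kernels of the $C^*$-epimorphisms of a family~\eqref{family}, and likewise the kernels of the restricted canonical projections $\pi_i$, always generate a distributive lattice of ideals. Hence both distributive-lattice hypotheses appearing in Proposition~\ref{iter} and Corollary~\ref{cor1} hold automatically, and the cocycle condition alone puts us in a position to invoke those results.

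Next I would combine Lemma~\ref{vect} with Corollary~\ref{cor1}. Lemma~\ref{vect} supplies the re-bracketing isomorphism identifying $B^\pi$ with the top pullback $P$ of diagram~\eqref{iter1}, while Corollary~\ref{cor1} fixes the gluing maps $\eta^1,\eta^2$ with common target $B^\pi_{123}$ and, crucially, asserts that every homomorphism in diagram~\eqref{iter1} is surjective. In particular each of the three square sub-diagrams is a pullback of two $C^*$-homomorphisms of which at least one---indeed both---is surjective, so the Mayer--Vietoris hypothesis is met in every case.

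Finally I would apply the six-term exact sequence to the three squares in turn. The bottom-left square, the pullback of $B_1\xrightarrow{\pi^1_2}B_{12}\xleftarrow{\pi^2_1}B_2$, yields the first sequence, whose horizontal map is $\pi^1_{2*}-\pi^2_{1*}$; the bottom-right square, the pullback of $B_{13}\xrightarrow{\eta^1}B^\pi_{123}\xleftarrow{\eta^2}B_{23}$, yields the second, with horizontal map $\eta^1_*-\eta^2_*$; and the top square, the pullback of $P_1\xrightarrow{\gamma}P_2\xleftarrow{\delta}B_3$, identified through the re-bracketing isomorphism with $B^\pi$, yields the third, with horizontal map $\gamma_*-\delta_*$. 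Reading off the corner groups and the connecting homomorphisms of each hexagon reproduces exactly the three displayed exact sequences.

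The genuinely substantive content---the surjectivity of $\gamma$ and $\delta$ and the coherent identification of the gluing maps $\eta^1,\eta^2$---has already been settled in Proposition~\ref{iter} and Corollary~\ref{cor1}, so I expect no real obstacle at this stage. The remaining work is the purely clerical verification that the horizontal arrows of each Mayer--Vietoris hexagon are the prescribed differences of induced maps and that the groups at the corners match; both follow immediately from the pullback structure of the three squares, once surjectivity is in hand.
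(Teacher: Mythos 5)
Your proposal is correct and follows exactly the paper's own route: note that $C^*$-ideals satisfy $I\cap J=IJ$ so both distributivity hypotheses hold automatically, then combine Lemma~\ref{vect} with Corollary~\ref{cor1} to get the iterated pullback diagram with all maps surjective, and apply the Mayer--Vietoris six-term exact sequence once to each of the three pullback squares. The paper compresses this into a single sentence preceding the corollary, but the content is identical to what you wrote.
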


\newpage
\section{The triple--Toeplitz deformation of $\B CP^2$}
\subsection{$C^*$-algebra}

We consider the case $n=2$ of the multi-Toeplitz deformations
\cite[Section 2]{pmh}
of the complex projective spaces. The $C^*$-algebra of
our quantum projective plane is given as the  triple-pullback 
of the following diagram:
\begin{equation}\label{thefamily}
\xymatrix{
\mathcal{T}\otimes \mathcal{T}\ar[dr]^{\sigma_1}\ar@/_3pc/[ddrr]_{\sigma_2}&&\mathcal{T}\otimes \mathcal{T}\ar[dl]_{\Psi_{01}\circ\sigma_1}\ar[dr]^{\sigma_2}&&\mathcal{T}\otimes \mathcal{T}\ar[dl]_{\Psi_{12}\circ\sigma_2}\ar@/^3pc/[ddll]^{\Psi_{02}\circ\sigma_1}\\
& \mathcal{C}(S^1)\otimes \mathcal{T}&&\mathcal{T}\otimes \mathcal{C}(S^1)&\\
&&\mathcal{T}\otimes \mathcal{C}(S^1)&&}.
\end{equation}
Here $\mathcal{T}$ is the Toeplitz algebra, 
$\sigma\colon\mathcal{T}\to\mathcal{C}(S^1)$ is the symbol
map, $\sigma_1:=\sigma\otimes \id$, $\sigma_2:=\id\otimes\sigma$, and
\begin{align}
\mathcal{C}(S^1)\otimes \mathcal{T}\ni u\otimes z&\stackrel{\Psi_{01}}{\longrightarrow} S(z^{(1)}u)\otimes z^{(0)} \in \mathcal{C}(S^1)\otimes \mathcal{T},\\
\mathcal{C}(S^1)\otimes \mathcal{T}\ni u\otimes z&\stackrel{\Psi_{02}}{\longrightarrow} z^{(0)}\otimes S(z^{(1)}u) \in \mathcal{T}\otimes \mathcal{C}(S^1),\\
\mathcal{T}\otimes \mathcal{C}(S^1) \ni z\otimes u&\stackrel{\Psi_{12}}{\longrightarrow}z^{(0)}\otimes S(z^{(1)}u)\in   \mathcal{T}\otimes \mathcal{C}(S^1),
\end{align}
where $\mathcal{T}\ni z\mapsto z^{(0)}\otimes z^{(1)}\in 
\mathcal{T}\otimes \mathcal{C}(S^1)$ is the coaction dual to the
gauge action on $\mathcal{T}$, and $S(f)(g):=f(g^{-1})$.

\subsection{$K$-theory} The main result of this note is the following:
\begin{thm} The $K$-groups of the triple-Toeplitz deformation of 
$\B CP^2$ are:
$$K_0(\mathcal C(\proj C 2))=\mathbb{Z}^3,\qquad K_1(\mathcal C(\proj C 2))=0.$$
\end{thm}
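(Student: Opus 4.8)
The plan is to verify that the family~\eqref{thefamily} satisfies the cocycle condition and then to invoke Corollary~\ref{cor} to obtain the three Mayer--Vietoris six-term exact sequences, which I would run in the order $P_1$, $P_2$, $B^\pi$. I expect the cocycle verification to be the principal structural obstacle: one must compute the ideals $\ker\sigma_1,\ker\sigma_2$ inside $\mathcal{T}\otimes\mathcal{T}$, confirm condition~(1) of Definition~\ref{cocycle} (equality of the images $\pi^i_j(\ker\pi^i_k)$), and then check that the gluing isomorphisms $\phi^{ij}_k$ assembled from the twists $\Psi_{01},\Psi_{02},\Psi_{12}$ obey the triangle identity~(2). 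Since the $\Psi$'s are built from the dual coaction and the flip $S$, this is an explicit but delicate calculation; granting it, Corollary~\ref{cor} reduces the theorem to $K$-theoretic bookkeeping.

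Next I would record the $K$-groups of all the corners. Using $K_*(\mathcal{T})=(\mathbb{Z},0)$ with the symbol map $\sigma$ inducing an isomorphism on $K_0$, together with $K_*(\mathcal{C}(S^1))=(\mathbb{Z},\mathbb{Z})$, the Künneth theorem (all groups are free, so no $\mathrm{Tor}$ terms occur) gives $K_*(\mathcal{T}\otimes\mathcal{T})=(\mathbb{Z},0)$ and $K_*(\mathcal{C}(S^1)\otimes\mathcal{T})=K_*(\mathcal{T}\otimes\mathcal{C}(S^1))=(\mathbb{Z},\mathbb{Z})$. Identifying the bottom corner $B^\pi_{123}$ with the two-torus algebra $\mathcal{C}(S^1)\otimes\mathcal{C}(S^1)$ yields $K_*(B^\pi_{123})=(\mathbb{Z}^2,\mathbb{Z}^2)$. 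I would also fix explicit generators, namely the class of the unit together with the unitary and Bott classes coming from the circle factors, so that the induced maps can be read off directly.

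Running the first sequence is immediate: the difference map on $K_0$ is $\sigma_{1*}-(\Psi_{01}\circ\sigma_1)_*\colon\mathbb{Z}^2\to\mathbb{Z}$, which is $(a,b)\mapsto a-b$ since both twists preserve the unit and hence act as the identity on $K_0$; it is surjective with kernel the diagonal $\mathbb{Z}$, while the index map carries $K_1(B_{12})=\mathbb{Z}$ into $K_0(P_1)$, so exactness gives $K_*(P_1)=(\mathbb{Z}^2,0)$. The crux of the whole argument is the second sequence. Here I must track the circle generators of $K_1(\mathcal{T}\otimes\mathcal{C}(S^1))=\mathbb{Z}$ through $\eta^1$ and $\eta^2$ and show that $(\eta^1_*-\eta^2_*)$ is an \emph{isomorphism} on $K_1$, while on $K_0$ it has rank one, with image $\mathbb{Z}\cdot[1]$, so that the Bott class of $B^\pi_{123}$ survives to the cokernel. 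Granting this, exactness gives $K_*(P_2)=(\mathbb{Z},\mathbb{Z})$. I expect establishing the $K_1$-isomorphism to be the main computational difficulty, as it is exactly the place where the deformation twists act nontrivially and where the two independent circle classes of $\mathcal{C}(S^1)\otimes\mathcal{C}(S^1)$ must each be hit.

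Finally I would feed these into the third sequence. Since $K_*(P_1)\oplus K_*(B_3)=(\mathbb{Z}^3,0)$ and $K_*(P_2)=(\mathbb{Z},\mathbb{Z})$, the vanishing of $K_1(P_1)\oplus K_1(B_3)$ makes the index map inject $K_1(P_2)=\mathbb{Z}$ into $K_0(B^\pi)$, while $\delta_*[1]_{B_3}=[1]_{P_2}$ makes $\gamma_*-\delta_*\colon\mathbb{Z}^3\to\mathbb{Z}$ surjective on $K_0$; the latter forces $K_1(B^\pi)=0$ and leaves the short exact sequence
\[
0\longrightarrow\mathbb{Z}\longrightarrow K_0(B^\pi)\longrightarrow\ker(\gamma_*-\delta_*)\cong\mathbb{Z}^2\longrightarrow 0,
\]
which splits to give $K_0(B^\pi)=\mathbb{Z}^3$. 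Since $\mathcal{C}(\proj{C}{2})$ is by definition the triple-pullback $B^\pi$ of~\eqref{thefamily}, this yields exactly the asserted values $K_0=\mathbb{Z}^3$ and $K_1=0$.
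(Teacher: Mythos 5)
Your architecture is exactly the paper's: grant the cocycle condition, pass to the iterated-pullback presentation, and run the three Mayer--Vietoris sequences of Corollary~\ref{cor} in the order $P_1$, $P_2$, $B^\pi$. Your corner $K$-groups (via K\"unneth) are right, and your first and third sequences --- surjectivity of $(m,n)\mapsto m-n$ giving $K_*(P_1)=(\mathbb{Z}^2,0)$, then surjectivity of $\gamma_*-\delta_*$ from unitality of $\delta$ plus the split extension $0\to\mathbb{Z}\to K_0(B^\pi)\to\mathbb{Z}^2\to 0$ --- coincide with the paper's. The one genuine divergence is the middle step: the paper does not compute $K_*(P_2)$ from its sequence at all, but quotes \cite{hms} for $K_0(P_2)=\mathbb{Z}$ generated by $[1]$ and $K_1(P_2)=\mathbb{Z}$, remarking only that the sequence is a special case of the one studied in \cite{bhms05}. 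You instead propose to run that sequence directly, which amounts to redoing the computation of \cite{bhms05}; if carried out, this is a legitimate alternative, and it has the bonus of producing, as a byproduct, exactly the generator statement the third sequence needs: $K_0(P_2)$ is the diagonal $\ker(\eta^1_*-\eta^2_*)$, hence generated by $[1_{P_2}]$, which is what makes $\delta_*$ onto.

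The weakness is that your two ``granting'' clauses sit precisely where the mathematical content is. First, the cocycle condition for the family \eqref{thefamily}: the paper discharges it by citing \cite[Lemma~3.2]{pmh}; you describe the verification but do not perform it. Second, and more seriously, the claim that $\eta^1_*-\eta^2_*$ is an isomorphism on $K_1$. This is not routine bookkeeping; it is the one place the answer could go wrong. If the two gluing maps $B_{13}\to B^\pi_{123}\leftarrow B_{23}$ were both the untwisted symbol map $\sigma_1$ (which is what a naive reading of the pullback diagram suggests), then $\eta^1_*-\eta^2_*$ would have rank one on $K_1$, the sequence would yield $K_*(P_2)=(\mathbb{Z}^2,\mathbb{Z}^2)$, and the theorem's conclusion would fail. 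It is only because the identifications forced by the cocycle structure (the twists $\Psi_{01},\Psi_{02},\Psi_{12}$ built from the dual coaction) interchange the two circle generators of $K_1(\mathcal{C}(S^1)\otimes\mathcal{C}(S^1))$ --- a Heegaard-type gluing --- that the difference map is invertible. So you have correctly located the crux and your expectations for it are accurate, but until you either carry out that twist computation or invoke \cite{hms} or \cite{bhms05} for it, what you have is a correct skeleton rather than a complete proof.
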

\begin{proof}
Since the family~\eqref{thefamily} satisfies the cocycle condition
by \cite[Lemma~3.2]{pmh}, we can apply Corollary~\ref{cor} to 
compute the $K$-groups of its triple-pullback $C^*$-algebra
$\mathcal{C}(\proj C 2)$. First, we present  
$\mathcal{C}(\proj C 2)$ as the pullback $C^*$-algebra
of the diagram
\begin{equation}
\xymatrix@C-30pt@R-10pt{
&&&\mathcal{C}(\proj C 2)\ar[dll]\ar[drr]&&\\
&P_1\ar[dr]\ar[drrr]\ar[dl]&&&&\mathcal{T}\otimes\mathcal{T}\ar[dl]\\
\mathcal{T}\otimes\mathcal{T}\ar[dr]_{\sigma_1}&&\mathcal{T}\otimes\mathcal{T}\ar[dl]^{{\Psi_{01}}\circ\sigma_1}&&P_2\ar[dl]\ar[dr]&\\
& \mathcal{C}(S^1)\otimes\mathcal{T}&&\mathcal{T}\otimes \mathcal{C}(S^1)\ar[dr]_{\sigma_1}&&\mathcal{T}\otimes \mathcal{C}(S^1)\ar[dl]^{\sigma_1}\\
&&&&\mathcal{C}(S^1)\otimes \mathcal{C}(S^1)&}
\end{equation}
with all arrows surjective.
We know that 
\begin{equation}\label{knowngroups}
\xymatrix@-20pt{
K_0(\mathcal{T}^{\otimes 2})=\B Z,&K_0(\mathcal{C}(S^1)=\B Z,&K_0(\mathcal{T}\otimes \mathcal{C}(S^1))=\B Z,\\
K_0(\mathcal{T}^{\otimes 2})=0,&K_0(\mathcal{C}(S^1)=\B Z,&K_0(\mathcal{T}\otimes \mathcal{C}(S^1))=\B Z,}
\end{equation}
and that the generators of $K_0$ are 
$[1\otimes 1]\in K_0(\mathcal{T}^{\otimes 2})$ and $[1\otimes 1]\in K_0(\mathcal{T}\otimes\mathcal{C}(S^1))$.

Now the first diagram of Corollary~\ref{cor} becomes
\begin{equation}
\xymatrix{
K_0( P_1)\ar[r]&K_0(\mathcal{T}^{\otimes 2}) \oplus K_0(\mathcal{T}^{\otimes 2})\ar[r]&K_0(\mathcal{T}\otimes\mathcal{C}(S^1))\ar[d]\\
K_1(\mathcal{T}\otimes\mathcal{C}(S^1))\ar[u]&K_1(\mathcal{T}^{\otimes 2}) \oplus K_1(\mathcal{T}^{\otimes 2})\ar[l]&K_1( P_1).\ar[l]}
\end{equation}
After plugging in \eqref{knowngroups}, we obtain
\begin{equation}
\xymatrix@C+20pt{
K_0( P_1)\ar[r]&\mathbb{Z}\oplus\mathbb{Z}\ar@{.>}[r]^{(m,n)\mapsto m-n}&\mathbb{Z}\ar[d]\\
\mathbb{Z}\ar[u]&0\ar[l]&K_1( P_1).\ar[l]}
\end{equation}
This yields $K_0(P_1)=\mathbb{Z}\oplus\mathbb{Z}$ and $K_1( P_1)=0$ because the dotted arrow is onto. 

Next, the second diagram of Corollary~\ref{cor} becomes
\begin{equation}
\xymatrix{
K_0( P_2)\ar[r]&K_0(\mathcal{T}\otimes\mathcal{C}(S^1)) \oplus K_0(\mathcal{C}(S^1)\otimes\mathcal{T})\ar[r]&K_0(\mathcal{C}(S^1)\otimes\mathcal{C}(S^1))\ar[d]\\
K_1(\mathcal{C}(S^1)\otimes\mathcal{C}(S^1))\ar[u]&K_1(\mathcal{T}\otimes\mathcal{C}(S^1)) \oplus K_1(\mathcal{C}(S^1)\otimes\mathcal{T})\ar[l]&K_1( P_2).\ar[l]}
\end{equation}
This is a special case of an exact sequence studied in 
\cite[Section~4]{bhms05}. On the other hand, using a different method,
it was already determined in \cite[Section~3]{hms} that 
 \mbox{$K_0(P_2)=\mathbb{Z}$}
 (generated by $1\in P_2$) and $K_1(P_2)=\mathbb{Z}$. 

Finally, the
last  diagram of Corollary~\ref{cor} becomes
\begin{equation}
\xymatrix{
K_0(\mathcal{C}(\proj C 2))\ar[r]&K_0( P_1) \oplus K_0( \mathcal{T}^{\otimes 2})\ar[r]&K_0( P_2)\ar[d]\\
K_1( P_2)\ar[u]&K_1( P_1) \oplus K_1( \mathcal{T}^{\otimes 2})\ar[l]&K_1(\mathcal{C}(\proj C 2)).\ar[l]}
\end{equation}
Equivalently, we can write it as
\begin{equation}
\xymatrix{
K_0(\mathcal{C}(\proj C 2))\ar[r]&(\mathbb{Z}\oplus \mathbb{Z})\oplus\mathbb{Z}\ar@{.>}[r]&\mathbb{Z}\ar[d]\\
\mathbb{Z}\ar[u]&0\ar[l]&K_1(\mathcal{C}(\proj C 2)).\ar[l]}
\end{equation}
The dotted map is of the form $(m,n,l)\mapsto km+k'n-l$. In particular,
it is onto, so that $K_1(\proj C 2)=0$. Furthermore, the kernel of
this map is $\mathbb{Z}^2$. Combining this with the fact that 
 the short
exact sequences of free modules split, we infer that $K_0(\proj C 2)=\mathbb{Z}^3$. 
\end{proof}
\noindent\textbf{Acknowledgments:}
I would like to thank my advisor P. M. Hajac for his helpful insight in writing  this note.
This work is part of the project \emph{Geometry and Symmetry of Quantum
Spaces} sponsored by the grants 
PIRSES-GA-2008-230836 and 
1261/7.PR~UE/2009/7.

\end{document}